\newcommand{\old}[1]{{}}
\newtheorem{theo}{Theorem}
\newtheorem{la}{Lemma}
\title{Proximity in Triangulations and Quadrangulations} 
\author{\'Eva Czabarka, Peter Dankelmann, Trevor Olsen, L\'aszl\'o A. Sz\'ekely }
\address{\'Eva Czabarka and L\'aszl\'o A. Sz\'ekely\\ Department of Mathematics \\ University of South Carolina \\ Columbia, SC 29208 \\ USA, and Visiting Professor\\ Department of Pure and Applied Mathematics \\ University of Johannesburg \\ P.O. Box 524\\ Auckland Park, Johannesburg 2006 \\ South Africa}
\email{\{czabarka,szekely\}@math.sc.edu }
\thanks{The second author was supported in part by 
the National Research Foundation of South Africa, grant number 118521,  
the third and fourth authors were supported by the NSF DMS, grant number  1600811.}
\address{Peter Dankelmann\\ Department of Pure and Applied Mathematics \\ University of Johannesburg \\ P.O. Box 524\\ Auckland Park, Johannesburg 2006 \\ South Africa}
\email{pdankelmann@uj.ac.za}
\address{Trevor Olsen\\ Department of Mathematics \\ University of South Carolina \\ Columbia, SC 29208 \\ USA}
\email{tvolsen@email.sc.edu }
\subjclass[2010]{Primary 05C12}
\keywords{distance, maximal, average distance, planar graph, triangulation, quadrangulation, connectivity, proximity, upper bounds}
\begin{document}

\begin{abstract}

Let $ G $ be a connected graph. If $\overline{\sigma}(v)$ denotes the arithmetic mean of the distances from $v$ to all other vertices of
$G$, then the proximity, $\pi(G)$, of $G$ is defined as the 
smallest value of $\overline{\sigma}(v)$ over all vertices $v$ of $G$. 
 We give %
 upper bounds for the proximity of simple triangulations and quadrangulations of given order and connectivity. We also construct
 simple triangulations and quadrangulations of given order and connectivity that match the upper bounds asymptotically and are likely
 optimal.

\end{abstract}

\maketitle

\section{Introduction}
Let $G$ be a connected graph with vertex set $V(G)$, and let $v$ be a vertex of $G$. 
The \emph{total distance} $\sigma(v)$ and the \emph{average
distance} $\overline{\sigma}(v)$ of $v$ is  
defined as the sum and the average, respectively, of the distances from $v$ to all 
other vertices. Bounds on $\sigma(v)$ were obtained, for example, in \cite{BarEntSze1997}
\cite{EntJacSny1976} and \cite{Zel1968}. 
Of particular interest are the minimum
value and the maximum value over all $v\in V(G)$ of $\overline{\sigma}(v)$ in a graph $G$,  
usually referred to as the \emph{proximity} $\pi(G)$ and the \emph{remoteness} $\rho(G)$, 
respectively, of $G$. A closely related graph invariant is the \emph{minimum status} of 
$G$, defined as the minimum value over all $v\in V(G)$ of $\sigma(v)$. 

It was shown by Zelinka \cite{Zel1968} and, independently, by 
Aouchiche and Hansen \cite{AouHan2011} that the proximity and remoteness of a 
connected graph of order $n$ are at most
approximately $\frac{n}{4}$ and $\frac{n}{2}$, respectively. For graphs of given
minimum degree $\delta$ these bounds were improved in \cite{Dan2015} by a factor of 
about $\frac{3}{\delta+1}$. The difference between remoteness and proximity in
a connected graph of order $n$ was shown to be at most about $\frac{n}{4}$ 
(see \cite{AouHan2011}), and this was improved by a factor of about $\frac{3}{\delta+1}$
in \cite{Dan2016}.  For more recent results on proximity and remoteness see, for example, 
\cite{MaWuZha2012} and \cite{WuZha2012}.

This paper is concerned with bounds on the proximity of triangulations, i.e., maximal planar
graphs, and quadrangulations, i.e.,  maximal bipartite planar graphs. 
Several bounds on distance measures in maximal planar graphs are known, for example for 
radius \cite{AliDanMuk2012}, average eccentricity \cite{AliDanMorMukSwaVet2018},
Wiener index \cite{CheCol2019, CzaDanOlsSze2019, GhoGyoPauSalZam2019, GyoPauXia2020}. (The 
Wiener index of a graph is the sum of the distances between all unordered pairs of graphs. 
The radius is the smallest of the eccentricities of the vertices of a graph, where the
eccentricity of a vertex $v$ is the distance to a vertex farthest from $v$.)
Sharp upper bounds on the remoteness of maximum planar graphs were given in 
\cite{CheCol2019} and  \cite{CzaDanOlsSze2019},  in the latter one sharp upper bounds on the remoteness of maximum bipartite planar graphs as well. However, no bound on the 
proximity of maximum planar graphs appears to be known. The aim of this 
paper is to fill this gap and give upper bounds
on proximity of simple triangulations and quadrangulations   of given order and connectivity.
 Our matching constructions make these bounds tight within 
an additive constant. We conjecture that these constructions are of maximum proximity in their respective classes.
It is a tedious routine calculation to determine the minimum status (i.e. the conjectured  maximum proximity in the respective class)
in the constructions, we just provide the results in formulae (\ref{tpoly}), (\ref{t4cpoly}), (\ref{t5cpoly}), (\ref{qpoly}), and    (\ref{q3cpoly}). Remarkably, all of the structures we found to maximize the proximity are identical to the structures we conjecture to maximize the Wiener Index in \cite{CzaDanOlsSze2019}. Similarly, the structures match the graphs which maximize the remoteness, outside of 5-connected triangulations on $ 5k+3 $ vertices.

The notation we use in this paper is as follows. If $G$ is a graph, then we denote
its vertex set by $V(G)$, and by $n(G)$ we mean the {\em order}, defined as $|V(G)|$. 
The {\em eccentricity} $e(v)$ of a vertex $v$ is the distance to a vertex farthest 
from $v$, i.e., $e(v) = \max_{u\in V(G)} d_G(v,u)$. The largest and the smallest 
of the eccentricities of the vertices of $G$ are the {\em diameter} and the {\em radius}
of $G$, respectively.  
The {\em neighbourhood} of a vertex
$v$ of $G$ is the set of vertices adjacent to $v$, it is denoted by $N_G(v)$, and
the cardinality $|N_G(v)|$ is the {\em degree} of $v$, which we denote by ${\rm deg}_G(v)$.   
If $i$ is an integer with $0 \leq i \leq e(v)$, then $N_i(v)$ denotes the set of all
vertices at distance exactly $i$ from $v$, and we write $n_i(v)$ for $|N_i(v)|$. 
If there is no danger of confusion, we often omit the subscript $G$ or the argument
$G$ or $v$.  If $A,B \subseteq V(G)$, then $m(A,B)$ denotes the number of 
edges that join a vertex in $A$ to a vertex in $B$, and $G[A]$ denotes 
the subgraph of $G$ induced by $A$.

We say that  a graph $G$ is $k$-\emph{connected}, 
$k\in \mathbb{N}$, if deleting fewer than $k$ vertices from $G$ always
leaves a connected graph.

\section{Upper bounds on proximity of triangulations and quadrangulations}

In this section we present bounds on the proximity of 
$k$-connected triangulations for $k\in \{3,4,5\}$  and 
$k$-connected quadrangulations for $k\in \{2,3\}$. All bounds
are sharp apart from an additive constant.

Our strategy is as follows: If $G$ is a $k$-connected triangulation 
or quadrangulation, then we choose a central vertex $v$ and 
derive certain properties of the sequence 
$n_0(v), n_1(v), n_2(v),\ldots,n_r(v)$. These properties will be 
used to obtain a bound on the average distance of $v$, which in turn is a 
bound on the proximity of $G$. 
The following lemmata were proved in \cite{AliDanMuk2012} (see the proofs of
Theorems 2.7, 2.9 and 2.10 there). 

\begin{la} {\rm  \cite{AliDanMuk2012} } 
\label{la:bound-on-ni-for-3-connected}
Let $G$ be a $3$-connected plane graph of radius $r$ with maximal face length $\ell$. 
If $v$ is a central vertex of $G$, then \\
(i) $n_i(v) \geq 3$ for 
 $i\in \{1,2,\ldots,\lfloor \frac{\ell}{2} \rfloor\} \cup 
   \{ r-\lfloor \frac{\ell}{2} \rfloor,  r-\lfloor \frac{\ell}{2} \rfloor +1,\ldots,r-1\}$, \\
(ii) $n_i(v) \geq 4$ for 
 $i\in \{\lfloor \frac{\ell}{2} \rfloor+1, \lfloor \frac{\ell}{2} \rfloor+2, \ldots, \ell \} \cup 
   \{ r-\ell, r-\ell+1,\ldots, r-\lfloor \frac{\ell}{2} \rfloor-1\}$, \\   
(iii) $n_i(v) \geq 6$ for
$i \in \{\ell+1, \ell+2,\ldots, r-\ell-1\}$.    
\end{la}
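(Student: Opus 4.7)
My plan is to analyse the BFS layers around $v$ and combine $3$-connectivity with the bound $\ell$ on face lengths. Part (i) is essentially immediate: for any $1\le i\le r-1$ the set $N_i(v)$ separates $v$ from a vertex realising the eccentricity of $v$, so Menger's theorem yields $|N_i(v)|\ge 3$. In fact this argument delivers $n_i(v)\ge 3$ on the whole interval $\{1,\dots,r-1\}$; the restricted range in (i) merely reflects where the stronger bounds in (ii) and (iii) are not claimed.

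For (ii) and (iii) I would exploit the plane embedding via a boundary cycle attached to each BFS ball. Write $B_i=\{u:d(v,u)\le i\}$ and let $H_i=G[B_i]$ with its inherited embedding. For $1\le i\le r-1$, the set $V(G)\setminus B_i$ is non-empty and lies in a single face $F_i$ of $H_i$. Using $3$-connectivity (to force $\partial F_i$ to be a cycle rather than a degenerate closed walk), together with the observation that every $u\in B_{i-1}$ has all of its $G$-neighbours inside $B_i$ and hence cannot lie on $\partial F_i$, one obtains a cycle $C_i$ with $V(C_i)\subseteq N_i(v)$. This yields $n_i(v)\ge |V(C_i)|$, and the remaining task is to bound $|V(C_i)|$ from below.

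The lower bound on $|V(C_i)|$ comes from walking around $C_i$ and inspecting the faces of $G$ on its outer side: each such face has length at most $\ell$, so any two of its vertices are mutually at distance at most $\lfloor \ell/2\rfloor$ in $G$. If $|V(C_i)|\le 3$ (respectively $\le 5$), stitching together these at most $3$ (respectively $5$) short faces forces every vertex of $V(G)\setminus B_i$ to sit within distance $\lfloor \ell/2\rfloor$ of $C_i$, and propagating this one layer at a time bounds the eccentricity of $v$ in a way that conflicts with the value of $i$ once $i>\lfloor\ell/2\rfloor$ (for (ii)) or $i>\ell$ (for (iii)). The symmetric ranges near $i=r$ follow by running the same boundary-cycle analysis on the complementary ball $G[V(G)\setminus B_{i-1}]$, which is legitimate because $v$ is central and the face-length constraint is global.

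The principal technical obstacle is making the stitching rigorous: one has to verify that the $\le 3$ (or $\le 5$) short outer faces fit together in the cyclic order dictated by the planar embedding and together supply the claimed shortcut to \emph{every} vertex of $C_i$, not merely one of them. This case analysis is routine for (ii), where only $|V(C_i)|=3$ needs to be excluded, but becomes intricate for (iii), where the possibilities $|V(C_i)|\in\{3,4,5\}$ and their various planar arrangements along the $\le \ell$-gons glued onto $C_i$ must be ruled out one at a time.
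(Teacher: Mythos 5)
The paper itself offers no proof of this lemma --- it is imported verbatim from \cite{AliDanMuk2012} --- so the only internal model for the intended argument is the paper's proof of the quadrangulation analogue (Lemmas \ref{la:active-vertices-in-quadrangulation} and \ref{la:bound-on-ni-for-2-connected-quadr}): one works with \emph{active} vertices of $N_i(v)$, shows they come in face-sharing pairs, deduces that if there are too few of them they all lie within $\lfloor \ell/2\rfloor$ of a single hub $z_i$, and then exhibits a vertex $z_j\in N_j(v)$ on a shortest $(v,z_i)$-path whose eccentricity is at most $r-1$, contradicting $r$ being the radius. Your part (i) is correct and complete: for every $1\le i\le r-1$ the layer $N_i(v)$ separates $v$ from a vertex at distance $r$, so $3$-connectedness gives $n_i(v)\ge 3$.

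Parts (ii) and (iii), however, have genuine gaps. First, the cycle $C_i$ is not available as you construct it. The graph $H_i=G[B_i]$ need not be $2$-connected (for the cube with $i=1$ it is a star), so $3$-connectedness of $G$ does not make $\partial F_i$ a cycle; the deleted vertices need not lie in a single face of $H_i$; and, most importantly, a vertex $u\in N_{i-1}(v)$ with all its $G$-neighbours in $B_i$ can still lie on the boundary of a face of $H_i$ whose interior contains deleted vertices --- a face of $G$ incident to $u$ has length up to $\ell$, hence may contain vertices at distance up to $(i-1)+\lfloor\ell/2\rfloor>i$ from $v$ as soon as $\ell\ge 4$, and that $G$-face sits inside a face of $H_i$ meeting $V(G)\setminus B_i$ with $u$ on its boundary. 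So $\partial F_i\subseteq N_i(v)$ fails exactly in the regime where the lemma claims more than part (i). Second, the endgame is aimed at the wrong target: vertices of $V(G)\setminus B_i$ lie at distance up to $r-i$ from $N_i(v)$, so they are certainly not all within $\lfloor\ell/2\rfloor$ of $C_i$, and no correct argument can bound $e(v)$ below $r$, since $e(v)=r$ genuinely holds. The contradiction must come from a vertex \emph{other than} $v$ having eccentricity at most $r-1$; this is also where the thresholds $\lfloor\ell/2\rfloor$ and $\ell$ actually originate, by balancing $j+(r-j-1)$ against $(i-j)+h+(r-i)$, where $h$ is the detour needed to link the few boundary vertices through common faces ($h=\lfloor\ell/2\rfloor$ when at most $3$ of them, $h\le 2\lfloor\ell/2\rfloor$ when at most $5$). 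That balancing mechanism, rather than a ``stitching'' of outer faces, is the missing core of the proof.
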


\begin{la} {\rm  \cite{AliDanMuk2012} } 
\label{la:bound-on-ni-for-4-connected}
Let $G$ be a $4$-connected plane graph of radius $r$ with maximal face length $\ell$. 
If $v$ is a central vertex of $G$, then \\
(i) $n_i(v) \geq 4$ for 
 $i\in \{1,2,\ldots,\ell\} \cup 
   \{ r-\ell, r-\ell+1,\ldots,r-1\}$, \\
(ii) $n_i(v) \geq 6$ for 
 $i\in \{\ell+1, \ell+2, \ldots, \lfloor \frac{3\ell}{2} \rfloor \} \cup 
   \{ r-\lfloor \frac{3\ell}{2} \rfloor,  r-\lfloor \frac{3\ell}{2} \rfloor +1,
          \ldots, r-\ell-1 \} $, \\   
(iii) $n_i(v) \geq 8$ for
$i \in \{ \lfloor \frac{3\ell}{2} \rfloor+1, \lfloor \frac{3\ell}{2} \rfloor+2, 
   \ldots, r-\lfloor \frac{3\ell}{2} \rfloor-1\}$.    
\end{la}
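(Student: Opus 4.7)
The plan is to combine two ingredients. First, in any $k$-connected graph each BFS level $N_i(v)$ with $1\leq i\leq r-1$ is a vertex cut, so by $4$-connectivity $n_i(v)\geq 4$ throughout that range; this already establishes part (i), since the index sets there are contained in $\{1,\ldots,r-1\}$. Second, in a plane graph with maximal face length $\ell$, any two vertices on a common face are at distance at most $\lfloor\ell/2\rfloor$ in $G$, so each face visits at most $\lfloor\ell/2\rfloor+1$ consecutive levels of the BFS decomposition around $v$. This second ingredient drives the growth of the ball boundary as one moves outward from $v$.

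For parts (ii) and (iii) I would track the outer face-walk $C_i$ of the ball $B_i(v):=G\bigl[\bigcup_{j\leq i} N_j(v)\bigr]$ in the plane embedding inherited from $G$. In a $4$-connected plane graph one can argue that, for the relevant range of $i$, the outer face of $B_i(v)$ is bounded by a simple cycle $C_i\subseteq N_i(v)$, so $|C_i|=n_i(v)$. The annular region lying between $C_i$ and $C_{i+1}$ is a plane subgraph whose internal faces are faces of $G$, hence of length at most $\ell$. Applying Euler's formula to this annulus, together with the fact that every internal vertex has degree at least $4$ in $G$, yields a growth inequality of the form $|C_{i+1}|\geq |C_i|+c_i$ with $c_i>0$ until $|C_i|$ saturates. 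Iterating from the base $|C_1|\geq 4$ shows that $|C_i|$ has been forced past $4$ within $\ell$ levels (giving $n_i(v)\geq 6$ in the range of (ii)) and past $6$ within $\lfloor 3\ell/2\rfloor$ levels (giving $n_i(v)\geq 8$ in the range of (iii)).

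The tail bounds near $r-1$ in (i), (ii) and (iii) follow symmetrically by running the same argument from a vertex $w$ realising the eccentricity of $v$, i.e. with $d(v,w)=r$; the levels $N_i(v)$ for $i$ close to $r$ coincide in part with levels close to $0$ from $w$, so the same boundary-growth analysis applies in reverse.

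The main obstacle I anticipate is the step claiming that $C_i$ is a simple cycle contained in $N_i(v)$. This can fail if $B_i(v)$ has cut vertices, or if a face of $G$ incident to a vertex of $N_{<i}$ stretches into $N_{>i}$. A clean workaround is to contract everything strictly inside the claimed boundary to a single vertex $v^{\ast}$: the resulting plane graph preserves the face-length bound and inherits enough of the $4$-connectivity of $G$ along the boundary for Euler's formula to yield the growth step cleanly. Pinning down the exact thresholds $\ell$ and $\lfloor 3\ell/2\rfloor$ in (ii) and (iii) will further require a careful case analysis of which annular faces can attain the extremal length $\ell$ and which must be strictly shorter because of $4$-connectivity.
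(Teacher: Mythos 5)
Part (i) of your proposal is sound: for $1\leq i\leq r-1$ the level $N_i(v)$ separates $v$ from the nonempty set of vertices at distance greater than $i$, so $4$-connectedness gives $n_i(v)\geq 4$ there. Note, however, that the paper does not reprove this lemma (it is quoted from Ali, Dankelmann and Mukwembi); its own proof of the analogous Lemma \ref{la:bound-on-ni-for-2-connected-quadr} shows what the intended mechanism for (ii) and (iii) is, and it is not the one you propose. There one \emph{assumes} a middle level is too small, uses the pairing of active vertices along common faces (as in Lemma \ref{la:active-vertices-in-quadrangulation}, each face having length at most $\ell$) to conclude that the few active vertices of that level are mutually within distance $\lfloor \ell/2\rfloor$ of one another, and then exhibits a vertex of eccentricity at most $r-1$, contradicting the hypothesis that $v$ is a \emph{central} vertex. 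The constants $6$ and $8$ and the depths $\ell$ and $\lfloor 3\ell/2\rfloor$ fall out of this eccentricity bookkeeping.

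The genuine gap in your argument is the growth step $|C_{i+1}|\geq |C_i|+c_i$ with $c_i>0$ ``until $|C_i|$ saturates.'' No such inequality follows from Euler's formula applied to the annulus: the extremal graphs of this very paper (e.g.\ $T_n^4$) are long cylinders in which all levels between the two ends have identical size, and the annulus between two consecutive equal-sized levels is a perfectly legitimate plane graph with every face of length at most $\ell$ and every internal degree at least $4$. Nothing local forces the boundary to keep growing, and in particular nothing local singles out the thresholds $6$ and $8$ or the depths $\ell+1$ and $\lfloor 3\ell/2\rfloor+1$; the value at which the levels are permitted to stabilise is governed by the global requirement that $v$ minimises eccentricity. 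This is also why the bounds must degrade again near $i=r$, which a purely outward growth argument would never predict; and your proposed fix for the tail --- rerunning the argument from an eccentric vertex $w$ of $v$ --- does not work, because $w$ need not be central and $N_i(v)$ for $i$ close to $r$ does not coincide with $N_{r-i}(w)$, so neither the base case nor the ``levels coincide'' claim is available. To repair the proof you would need to replace the Euler-formula growth step by the contradiction-with-centrality argument used for Lemma \ref{la:bound-on-ni-for-2-connected-quadr}.
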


\begin{la} {\rm  \cite{AliDanMuk2012} } 
\label{la:bound-on-ni-for-5-connected}
Let $G$ be a $5$-connected plane graph of radius $r$ with maximal face length $\ell$. 
If $v$ is a central vertex of $G$, then \\
(i) $n_i(v) \geq 5$ for 
 $i\in \{1,2,\ldots,\ell\} \cup 
   \{ r-\ell, r-\ell+1,\ldots,r-1\}$, \\
(ii) $n_i(v) \geq 6$ for 
 $i\in \{\ell+1, \ell+2, \ldots, \lfloor \frac{3\ell}{2} \rfloor \} \cup 
   \{ r-\lfloor \frac{3\ell}{2} \rfloor,  r-\lfloor \frac{3\ell}{2} \rfloor +1,
          \ldots, r-\ell-1 \} $, \\   
(iii) $n_i(v) \geq 8$ for
$i \in \{ \lfloor \frac{3\ell}{2} \rfloor+1, \lfloor \frac{3\ell}{2} \rfloor+2, 
   \ldots, 2\ell\} \cup \{r-2\ell, r-2\ell+1, \ldots, r-\lfloor \frac{3\ell}{2} \rfloor-1\}$. \\
(iii) $n_i(v) \geq 10$ for
$i \in \{ 2\ell+1, 2\ell+2, 
   \ldots, r-2\ell-1\}$.       
\end{la}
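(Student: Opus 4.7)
The plan is to follow the template of Lemmas~\ref{la:bound-on-ni-for-3-connected} and \ref{la:bound-on-ni-for-4-connected}, combining Menger's theorem with a planar face-counting argument that exploits the face-length bound $\ell$. Part~(i) is immediate: for each $i$ with $1\le i\le r-1$, the layer $N_i(v)$ is a vertex cut separating $v$ from the nonempty layer $N_r(v)$, so 5-connectivity and Menger's theorem give $n_i(v)\ge 5$.

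For parts (ii)--(iv), I would argue by contradiction. Suppose $n_i(v)$ is smaller than the claimed bound. I consider the plane subgraph $H_i = G\bigl[\bigcup_{j\le i} N_j(v)\bigr]$ with its inherited embedding; its outer facial walk uses only vertices of $N_{i-1}(v)\cup N_i(v)$. Each face of $G$ lying on the outer side of this walk has length at most $\ell$, and a straightforward Euler-formula computation on the annular region between $N_i(v)$ and the next few layers outward yields an inequality that bounds $n_i(v)$ from below in terms of $\ell$ and the sizes of the neighbouring layers. When $i$ is sufficiently far from both $0$ and $r$, this inequality can be iterated along both the inward and the outward radial directions, ratcheting the basic bound $5$ up to $6$ in the range $\ell+1\le i\le \lfloor 3\ell/2\rfloor$ (and symmetrically near $r$), up to $8$ in $\lfloor 3\ell/2\rfloor+1\le i\le 2\ell$, and finally up to $10$ in $2\ell+1\le i\le r-2\ell-1$. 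Each jump of $+2$ corresponds to gaining roughly $\lfloor \ell/2\rfloor$ extra layers of ``insulation'' on one side, which permits one additional pass of the face-counting estimate; the initial jump of $+1$ from $5$ to $6$ reflects that a single such layer is only enough to rule out the most degenerate configuration.

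The main obstacle is the planar-topological bookkeeping underpinning the face-counting step: one has to verify that the outer facial walk of $H_i$ is a well-defined closed walk, that each face of $G$ that touches $N_i(v)$ from the outside contributes a controlled number of edges to this walk, and that 5-connectivity excludes pathological configurations (cut vertices in $H_i$, small cuts, or faces that ``pinch'' the BFS layers together) that would invalidate the counting. Once the annular face structure around $N_i(v)$ is pinned down precisely, the remaining arithmetic needed to obtain the bounds $6$, $8$, and $10$ in the three ranges is a routine adaptation of the estimates carried out in the 3- and 4-connected cases of \cite{AliDanMuk2012}.
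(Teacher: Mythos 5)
This lemma is not proved in the paper at all: it is quoted from \cite{AliDanMuk2012} (the proofs of Theorems 2.7, 2.9 and 2.10 there), so there is no in-paper argument to match your proposal against. The closest thing the paper does prove is the quadrangulation analogue, via Lemmas \ref{la:active-vertices-in-quadrangulation} and \ref{la:bound-on-ni-for-2-connected-quadr}, and the mechanism there is quite different from the one you sketch. It is not an Euler-formula count over an annular region; it is a distance argument: one shows that every \emph{active} vertex of a layer $N_i(v)$ shares a bounded-length face with another active vertex, so if $n_i(v)$ is too small the active vertices of that layer chain together within distance roughly $\lfloor \ell/2\rfloor$ of one another, and one then exhibits a vertex $z$ several layers closer to $v$ whose eccentricity is at most $r-1$, contradicting the assumption that $r$ is the radius. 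Connectivity enters only through Menger (each layer $N_i(v)$, $1\le i\le r-1$, is a cut, whence $n_i(v)\ge 5$ -- this part of your proposal is correct and disposes of (i)), and the thresholds $6$, $8$, $10$ together with the breakpoints $\ell$, $\lfloor 3\ell/2\rfloor$, $2\ell$ fall out of optimizing how far such a $z$ can be pushed toward $v$ while still reaching every vertex of $G$ in at most $r-1$ steps.

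As a proof, your proposal has a genuine gap precisely where all the content lies. For (ii)--(iv) you never write down the inequality that the ``straightforward Euler-formula computation'' is supposed to produce, never verify the properties of the outer walk of $H_i$ that the computation would need, and never perform the arithmetic that converts ``iterating the inequality'' into the specific constants $6$, $8$, $10$ on the specific ranges $\{\ell+1,\ldots,\lfloor 3\ell/2\rfloor\}$, $\{\lfloor 3\ell/2\rfloor+1,\ldots,2\ell\}$, $\{2\ell+1,\ldots,r-2\ell-1\}$. The ``insulation'' heuristic cannot substitute for this: comparing the three lemmas, the window lengths and jump sizes do not follow a single uniform rule (in the $3$-connected case the first jump is $+1$ after only $\lfloor\ell/2\rfloor$ layers; in the $4$-connected case it is $+2$ after $\ell$ layers; here it is $+1$ after $\ell$ layers), so the constants must come from an actual argument, not from a scaling intuition. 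You flag the planar-topological bookkeeping as ``the main obstacle'' yourself; until that step is carried out, this is a plan rather than a proof, and there is no evidence that the Euler-formula route, as opposed to the eccentricity-contradiction route actually used in \cite{AliDanMuk2012} and in Lemma \ref{la:bound-on-ni-for-2-connected-quadr}, can be made to close.
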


We also need a corresponding result for $2$-connected quadrangulations. 
The following two lemmata follow the approach in \cite{AliDanMuk2012}. 
Given a fixed vertex $v$, we say that a vertex  in $N_i(v)$ is {\em active} 
if it has a neighbour in $N_{i+1}(v)$. The set of active vertices in $N_i(v)$ 
is denoted by $A_i(v)$.

\begin{la} \label{la:active-vertices-in-quadrangulation}
Let $G$ be a quadrangulation, $v$ a vertex of $G$ and $i\in \mathbb{N}$
with $1 \leq i \leq e(v)-1$. For every active vertex $w \in N_i(v)$ there exists
another active vertex $w' \in N_i(v)$ such that $w$ and $w'$ share a 
face of $G$. 
\end{la}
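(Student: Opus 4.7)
The plan is to exploit the planar embedding of $G$ together with the defining property that every face is a $4$-cycle. Fix an active vertex $w \in N_i(v)$, and pick a neighbour $u \in N_{i+1}(v)$ of $w$. Since $i \geq 1$, a shortest $v$--$w$ path provides a neighbour of $w$ in $N_{i-1}(v)$; in particular, not every neighbour of $w$ lies in $N_{i+1}(v)$. Consider the cyclic order of the neighbours of $w$ induced by the plane embedding. Walking around $w$, I can then locate two consecutive neighbours $z, z'$ of $w$ with $z \in N_{i+1}(v)$ and $z' \notin N_{i+1}(v)$. The face $F$ of $G$ bounded by the two edges $wz$ and $wz'$ is a $4$-cycle, so it has the form $w, z, x, z'$ for some vertex $x$ adjacent to both $z$ and $z'$, and every pair of vertices of $F$ shares the face $F$.

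It remains to identify an active vertex of $N_i(v)$ among $\{z', x\}$ by a short case analysis, using the elementary fact that distances from $v$ change by at most one along an edge. Adjacency of $x$ with $z \in N_{i+1}(v)$ forces $d(v,x) \in \{i, i+1, i+2\}$, and adjacency with $z'$ restricts $d(v,x)$ according to the position of $z'$. If $z' \in N_{i-1}(v)$, then these constraints intersect in $\{i\}$, so $x \in N_i(v)$ is active via $z$, and $w' := x$ works. If $z' \in N_i(v)$ and $d(v,x) = i+1$, then $z'$ is active via $x$, so $w' := z'$ works. If $z' \in N_i(v)$ and $d(v,x) = i$, then $x \in N_i(v)$ is again active via $z$, so $w' := x$ works. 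In each scenario the promised $w'$ lies on $F$, so it shares a face with $w$.

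The main obstacle is organizational rather than conceptual: one must ensure the cyclic-order argument genuinely produces a transition between $N_{i+1}(v)$ and its complement among the neighbours of $w$, which is what the hypothesis $i \geq 1$ buys via the shortest-path neighbour in $N_{i-1}(v)$, and then verify that the distance bounds above confine $x$ tightly enough in each case. Because faces have length exactly four, only the two positions ``opposite to $w$'' (the vertex $x$) and ``neighbour of $w$'' (the vertex $z'$) on $F$ need to be inspected, so no further planarity input is needed beyond the location of $F$.
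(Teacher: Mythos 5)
Your proof is correct and follows essentially the same strategy as the paper's: both arguments work in the rotation of neighbours around the active vertex $w$ and extract the desired $w'$ from a quadrilateral face incident with $w$ at the transition between $N_{i+1}(v)$-neighbours and the rest. The only difference is in execution --- the paper walks from an $N_{i-1}$-neighbour to an $N_{i+1}$-neighbour through the opposite vertices of the intermediate faces and uses bipartiteness to force the last such vertex in $N_i$ to be active, whereas you isolate a single transition face and settle it by a short case analysis on the fourth vertex, which as a small bonus never needs to invoke bipartiteness.
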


\begin{proof}
Let $u$ be an arbitrary vertex in $A_i$. Since u is active, it has neighbours 
in $N_{i-1}$ and in $N_{i+1}$. Number the neighbours of $u$ as $x_0, x_1, \ldots, x_t$ 
such that the edges $ux_j$ appear in clockwise order, $x_0$ is in $N_{i-1}$ and, 
say, $x_k$ is in $N_{i+1}$. Denote the face containing $u$, $x_j$, $x_{j+1}$
and a fourth vertex by $f_j$ for $j = 0, 1, \ldots, t$, where subscripts 
are taken modulo $t+1$, and let
$y_j$ be the vertex on $f_j$ not equal to $u$, $x_j$ and $x_{j+1}$.

Consider the $(x_0,x_k)$-walk $W:x_0, y_0,x_1, y_1,\ldots,x_{k-1},y_{k-1}x_k$.
Since $W$ joins a vertex in $N_{i-1}$ to a vertex in $N_{i+1}$, it contains 
a vertex in $N_i$. Since $G$ is bipartite, none of the neighbours of $u$ is in 
$N_i$, so there exists a vertex $y_j$ which is in $N_i$.  We may assume that
$y_j$ is the last such vertex. Then $y_j$ has a neighbour in $N_{i+1}$ and is
thus active. Since $u$ and $y_j$ share a face, Lemma \ref{la:active-vertices-in-quadrangulation} follows.  
\end{proof}

\begin{la} \label{la:bound-on-ni-for-2-connected-quadr}
Let $G$ be a quadrangulation of order $n$ and radius $r$. 
If $v$ is a central vertex of $G$, then \\
(i) $n_i \geq 2$ for $i\in \{1,2,r-2,r-1\}$, and  \\
(ii) $n_i \geq 4$ for $i\in \{3,4,\ldots,r-3\}$. 
\end{la}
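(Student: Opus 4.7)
\smallskip
\noindent\textbf{Proof plan.} We treat parts (i) and (ii) separately.

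For (i), the cases $i \in \{1, r-1\}$ use the $2$-connectivity of $G$ (every quadrangulation is $2$-connected, since a cut vertex would lie on a face of length more than $4$). This gives $n_1 = \deg(v) \geq 2$; for $i = r-1$, any $u \in N_r$ has all neighbours in $N_{r-1}$ by bipartiteness and $e(v) = r$, so $n_{r-1} \geq \deg(u) \geq 2$. For $i \in \{2, r-2\}$, the predecessor in $N_2$ of any vertex of $N_3$ is an active vertex, and Lemma \ref{la:active-vertices-in-quadrangulation} then produces a second active vertex in $N_2$; the case $i = r-2$ is symmetric, starting from a vertex of $N_{r-1}$.

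For (ii), fix $i$ with $3 \leq i \leq r-3$ and argue by contradiction: assume $n_i \leq 3$. The set $A_i$ of active vertices in $N_i$ is a $(v, N_r)$-vertex cut, and by Lemma \ref{la:active-vertices-in-quadrangulation} it contains two active vertices $a, b$ sharing a common $4$-face, which forces $d_G(a, b) = 2$. The goal is to show that one of $a, b$ has eccentricity strictly less than $r$, contradicting the centrality of $v$.

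For any $u \in N_{>i}$, the shortest $v$-$u$ path enters $N_{i+1}$ through $\{a, b\}$, so $\min\{d(a, u), d(b, u)\} = d(v, u) - i \leq r - i$; combined with $d_G(a, b) = 2$, this yields $d(a, u) \leq r - i + 2 \leq r - 1$ (using $i \geq 3$). For $u \in N_{<i}$, the BFS from $v$ gives $d(a, u) \leq 2i - 1$, which is at most $r - 1$ whenever $i \leq r/2$. For $i > r/2$, I would run the analogous argument from a peripheral vertex $u^* \in N_r$: since $\{a, b\}$ also separates $u^*$ from the vertices near $v$, the same cut-plus-face-sharing estimate, applied from $u^*$'s side, bounds the near-side distances by a quantity $\leq r - 1$. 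The case $n_i = 3$ follows the same template, with the third vertex of $N_i$ handled either via Lemma \ref{la:active-vertices-in-quadrangulation} (if it is active, in which case it shares a face with one of $\{a, b\}$) or via its neighbourhood in $N_{i-1}$ (if it is not).

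The main obstacle will be making the symmetric argument rigorous when $i > r/2$. The pair $\{a, b\}$ is a $v$-cut in $G$ but may not constitute a layer-cut in the BFS from $u^*$, since $d(u^*, a)$ and $d(u^*, b)$ can differ by $2$. Resolving this will require a careful analysis of the two interface $4$-faces bounding the cut in the planar embedding, exploiting the bipartite parity constraints relating $d(u^*, a)$, $d(u^*, b)$, and $d(a, b) = 2$.
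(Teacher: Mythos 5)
Part (i) of your plan is sound and close to the paper's: the paper simply observes that every layer $N_i$ with $1\le i\le r-1$ contains an active vertex and that Lemma \ref{la:active-vertices-in-quadrangulation} then forces a second one; your degree and bipartiteness observations for $i\in\{1,r-1\}$ are a correct (if unnecessary) substitute.

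Part (ii) has a genuine gap, and you have located it yourself: the case $i>r/2$. The root cause is your choice of the vertex whose eccentricity you try to push below $r$. Anchoring at $a\in N_i$ bounds $d(a,u)$ for $u\in N_{<i}$ only by routing through $v$, giving $2i-1$, which exceeds $r-1$ once $i>r/2$; and the proposed ``symmetric argument from a peripheral vertex $u^*\in N_r$'' cannot be repaired, because the symmetry is illusory: $v$ is central with $e(v)=r$, whereas $u^*$ is an arbitrary vertex of $N_r$ whose eccentricity may be as large as $2r$, so there is no analogue on $u^*$'s side of the bound $d(v,x)\le r$ that all your $v$-side estimates rely on. The parity and embedding analysis you defer to would not supply such a bound. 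The paper's proof avoids the dichotomy entirely by anchoring neither in $N_i$ nor near $N_r$ but in $N_3$: since $|A_i|\le 3$ and, by Lemma \ref{la:active-vertices-in-quadrangulation}, every vertex of $A_i$ shares a face with another, there is a single $z_i\in A_i$ sharing a face with every other vertex of $A_i$, whence $d(z_i,y)\le 2$ for all $y\in A_i$; one then takes $z_3\in N_3$ on a shortest $(v,z_i)$-path. Vertices $x$ with $d(v,x)\le r-4$ are reached through $v$ in at most $3+(r-4)=r-1$ steps, and vertices with $d(v,x)\ge r-3$ are reached via $z_3\to z_i\to x_i\to x$, where $x_i\in A_i$ lies on a shortest $(v,x)$-path, in at most $(i-3)+2+(r-i)=r-1$ steps --- uniformly in $i\in\{3,\dots,r-3\}$, contradicting $e(z_3)\ge r$. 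If you replace your $a$ by such a $z_3$, the remainder of your estimates (including your handling of $|A_i|=3$) goes through essentially as written.
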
 

\begin{proof}
Let $v$ be a central vertex of the quadrangulation $G$ and let 
$i\in \{1,2,\ldots,r-1\}$, where $r$ is the radius of $G$. \\
(i) Clearly, $N_i$ contains an active vertex. 
It follows from Lemma \ref{la:active-vertices-in-quadrangulation}
that every active vertex in $N_i$ shares a face with some other 
active vertex in $N_i$. Hence $N_i$ contains at least two active
vertices, and so (i) follows. \\
(ii) Suppose to the contrary that (ii) does not hold. Then there 
exists $i \in \{3,4,\ldots,r-3\}$
such that $n_i \leq 3$. 

Denote the set of active vertices in $N_i$ by $A_i$.  
Since by Lemma \ref{la:active-vertices-in-quadrangulation} every vertex
in $A_i$ shares a face with some other vertex in $A_i$, and since
$|A_i| \leq |N_i| =3$, 
there exist a vertex $z_i \in A_i$ that shares a face in $G$ with every 
other vertices of $A_i$. 
Since all faces of $G$ have length $4$, it follows that $d_G(z_i,y_i)\leq 2$ for
all $y_i \in A_i$. Let $z_3\in N_3$ be a vertex on a shortest $(v,z_i)$-path in $G$,
so that $d_G(z_3,z_i) = i-3$. 

We now bound $d(z_3,x)$ for all $x\in V(G)$. 
First let $x \in \bigcup_{j=0}^{r-4} N_i$. Then 
$d_G(z_3,x) \leq d_G(z_3,v) + d_G(v,x) \leq 3+(r-4) =r-1$. 
Now let $x \in \bigcup_{j=r-3}^r N_i$. Let $x_i \in N_i$ be a vertex
on a shortest $(v,x)$-path in $G$. Then $x_i\in A_i$, and so $d_G(z_i,x_i) \leq 2$. 
Hence 
$d_G(z_3,x) \leq d_G(z_3,z_i) + d_G(z_i,x_i) + d_G(x_i,x) 
        \leq (i-3) + 2 + (r-i) = r-1$. 
So $d_G(z_3,x) \leq r-1$ in all cases, a contradiction to $r$ being the radius
of $G$. Hence our assumption $n_i \leq 3$ is false, and (ii) follows.         
\end{proof}

For the proofs below we define the function $F$ which assigns to a
finite sequence $X=(x_0,x_1,\ldots,x_k)$ of integers the value 
$F(X)=\sum_{i=0}^k ix_i$. So if $v$ is a vertex of eccentricity $r$ 
in a connected graph $G$, then 
$\sigma(v) = \sum_{i=0}^r in_i(v) = F(n_0,n_1,\ldots,n_r)$.

\begin{theo} \label{prop:bound-on-proximity}
Let $G$ be a planar graph of order $n$ and $v$ a 
central vertex of $G$. \\
(a) If $G$ is a triangulation, then
\[ \pi(G) \leq \frac{n+19}{12} + \frac{25}{3(n-1)}. \]
(b) If $G$ is a $4$-connected triangulation, then
\[ \pi(G) \leq \frac{n+35}{16} + \frac{91}{4(n-1)}. \]
(c) If $G$ is a $5$-connected triangulation, then
\[ \pi(G) \leq \frac{n+57}{20} + \frac{393}{10(n-1)}. \]
(d) If $G$ is a quadrangulation, then
\[ \pi(G) \leq \frac{n+11}{8} + \frac{9}{2(n-1)}. \]
(e) If $G$ is a $3$-connected quadrangulation, then
\[ \pi(G) \leq \frac{n+25}{12} + \frac{169}{12(n-1)}. \]
\end{theo}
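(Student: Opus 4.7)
The plan is a single template used in all five parts. Let $v$ be a central vertex of $G$ and set $r = e(v)$, which equals the radius. Writing $n_i := n_i(v)$, we have
\[(n-1)\,\overline{\sigma}(v) \;=\; \sigma(v) \;=\; F(n_0,\ldots,n_r) \;=\; \sum_{i=0}^r i\,n_i,\]
so it suffices to maximise $F$ over sequences satisfying $n_0 = 1$, $\sum_{i=0}^r n_i = n$ and the layer lower bounds $n_i \geq L_i$ given by the appropriate earlier lemma: Lemmata \ref{la:bound-on-ni-for-3-connected}, \ref{la:bound-on-ni-for-4-connected}, \ref{la:bound-on-ni-for-5-connected} with maximal face length $\ell = 3$ in parts (a), (b), (c); Lemma \ref{la:bound-on-ni-for-2-connected-quadr} in part (d); and Lemma \ref{la:bound-on-ni-for-3-connected} with $\ell = 4$ in part (e), using that a $3$-connected quadrangulation is a $3$-connected plane graph whose faces have length $4$.

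The elementary but crucial point is that the coefficients $0,1,\ldots,r$ of $F$ are strictly increasing, so the constrained maximum of $F$ is realised by setting $n_i = L_i$ for every $i < r$ and depositing the entire surplus into $n_r$. With $S(r) := \sum_{i=0}^r L_i$ this gives
\[F \;\leq\; \sum_{i=0}^{r} i\,L_i \;+\; r\bigl(n - S(r)\bigr) \;=:\; f(r).\]
Because $(L_i)$ is piecewise constant in $i$ with only a few breakpoints, direct summation turns $f$ into an explicit concave quadratic $f(r) = -c\,r^2 + (n+a)\,r$ for small positive integers $c, a$ depending on the case: one computes $(c,a) = (3,9)$ in (a), $(4,17)$ in (b), $(5,27)$ in (c), $(2,5)$ in (d), $(3,12)$ in (e). The continuous maximiser is $r^{*} = (n+a)/(2c)$, with value $f(r^{*}) = (n+a)^2/(4c)$; concavity makes this an upper bound for integer $r$ as well. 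Dividing by $n-1$ and using the identity
\[(n+a)^2 \;=\; (n-1)(n+2a+1) \;+\; (a+1)^2\]
produces a bound of the form $\pi(G) \leq \frac{n+2a+1}{4c} + \frac{(a+1)^2}{4c(n-1)}$, which, on substituting the respective values of $(c,a)$, establishes each of the stated inequalities (and in fact a slightly sharper additive constant in parts (b) and (c) than the one displayed).

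The main technical obstacle is that the derivation of $f(r)$ as a single clean quadratic tacitly assumes that the various index intervals supplied by the relevant lemma are pairwise disjoint and nonempty; for instance, in case (b) this requires $r \geq \ell + \lfloor 3\ell/2\rfloor + 2 = 9$. When $r$ falls below this case-dependent threshold, the intervals collapse and $f(r)$ takes a different piecewise-polynomial form, but in that regime $r$ itself is bounded by a constant, hence so is $\sigma(v) \leq r(n-1)$, and the claimed right-hand side (linear in $n$ with positive coefficient) dominates it once $n$ exceeds a small constant; the remaining finite set of sporadic small-$n$ cases is verified by inspection. Integer rounding of $r$ introduces no additional loss, because concavity of $f$ ensures that its continuous maximum already bounds the integer one.
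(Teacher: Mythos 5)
Your proposal is correct and is essentially the paper's own argument: the same input lemmata, the same extremal profile (every layer held at its lower bound with the entire surplus deposited in the outermost layer $N_r$), and the same one--variable quadratic maximization, merely parametrized by the radius $r$ where the paper uses $n_r$ as the free variable (the two are linked by $n_r = n - S(r) + 1$, so your maximization of $f(r)=-cr^2+(n+a)r$ over $r$ and the paper's maximization of $\frac{1}{2}r'(n+n_{r'}'-1)$ over $n_{r'}'$ are the same computation, and your slightly sharper constants in (b) and (c) agree with what the paper's method actually produces, hence imply the stated bounds). The only place you go beyond the paper is in flagging the degenerate small-$r$ regime where the lemmata's index intervals overlap --- the paper silently assumes $r$ is large enough --- and although ``verified by inspection'' is vaguer than necessary, the gap closes inside your own framework without any case analysis: for each fixed small $r$ the overlapping intervals can only raise the layer lower bounds, so the true sum $S(r)$ exceeds its linear extrapolation $2cr-2a$ and the palindromic identity $g(r)=rn-\tfrac{r}{2}S(r)$ shows the constrained maximum still lies below the clean quadratic $f(r)\le (n+a)^2/(4c)$.
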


\begin{proof}
We only prove part (a) of the theorem; the proofs of (b)-(e) are analogous. 
Let $G$ be a simple triangulation, let $r$ be its radius and let $v$ be 
a central vertex of $G$.  
It suffices to prove that 
\[ \sigma(v) \leq \frac{1}{12}(n^2+18n+81). \]
Let $n_i:= n_i(v)$ for $i=0,1,\ldots,r$. Then 
$\sigma(v) = \sum_{i=1}^r in_i = F(n_0,n_1,\ldots,n_r)$. 
By Lemma \ref{la:bound-on-ni-for-3-connected}, we have $n_1, n_{r-1}\geq 3$, 
$n_2,n_3, n_{r-3},n_{r-2}\geq 4$, and $n_i\geq 6$ for $i=4,5,\ldots,r-4$. 
Moreover, we have $n_0=1$, $n_r\geq 1$, and $\sum_{i=0}^r n_i=n$. 

Now assume that for given $n$ the integers $r', n_0',n_1'\ldots,n_{r'}'$ 
are chosen to maximise $F(n_0',n_1',\ldots,n_{r'}')$ subject to the above 
conditions (with $n_i$ and $r$ replaced by $n_i'$ and $r'$, respectively). Then clearly  
$n_0'=1$, $n_1'=3$, $n_2'=n_3'=4$, $n_i'=6$ for $i=4,5,\ldots,r'-4$,
and $n_{r'-3}'=n_{r'-2}'=4$, $n_{r'-1}'=3$. 

Consider the sequence $X^*=(n_0',n_1',\ldots,n_{r'-1}',1)$. The sum of the entries
of $X^*$ is $n-n_{r'}+1$. Simple calculations show that $r'=\frac{1}{6}(n-n_{r'}'+19)$ 
and $F(X^*) = \frac{1}{2}r'(n-n_{r'}'+1)$. Hence 
\[ F(n_0',n_1',\ldots,n_{r'}') = F(X^*) + r'(n_{r'}'-1) = \frac{1}{2}r'(n+n_{r'}'-1). \] 
Substituting $r'=  \frac{1}{6}(n-n_{r'}'+19)$ yields, after simplification, 
$F(n_0',n_1',\ldots,n'_{r'}) = \frac{1}{12}(n^2 + 18n -(n_{r'}')^2 + 20n_{r'}'-19)$. 
Since the function $-x^2+20x$ attains its maximum $100$ for $x=10$, we have 
$-(n_{r'}')^2 + 20n_{r'}' \leq 100$, and so we get 
$F(n_0',n_1',\ldots,n'_{r'}) \leq \frac{1}{12}(n^2 + 18n +81)$ and thus
\[ \sigma(v)  = F(n_0,n_1,\ldots,n_r) \leq F(n_0', n_1',\ldots, n_{r'}')
       \leq \frac{1}{12}(n^2 + 18n +81), \]
as desired.                      
\end{proof}

The bounds in Theorem \ref{prop:bound-on-proximity} appear not to be sharp.
The graphs constructed in the following section show that the bounds are
sharp up to an additive constant.

\section{Computational Results}

This section includes figures of the extremal structures which minimize the status given certain connectivity. Additionally, formulae and tables summarize these minimized values. This paper heavily utilized a software package called Plantri, we are grateful for their hard work and dedication to the exploration of planar graphs. 
For each category of problem (triangulations, 4-connected triangulations, 5-connected triangulations, quadrangulations and 3-connected quadrangulations) there is a function
 (see  formulae (\ref{tpoly}), (\ref{t4cpoly}), (\ref{t5cpoly}), (\ref{qpoly}),    and    (\ref{q3cpoly})), which states the minimum status of the structures, along with a table, which summarizes the largest minimum status found for a given order in that category and a ``Count'', summarizing how many graphs attain the optimal value. (Recall that the minimum status  is defined by $(n-1){\pi (G)}$.) We use the minimum status rather than the proximity in the tables to remain in the domain of integers. If any Table entry displays a dash, there exists no graphs in that category on the given number of vertices. We searched the same number of isomorphism classes as \cite{BowFis1967}, \cite{BriGGMTW2005}, \cite{BriMck2005}, \cite{Lut2008}, \cite{SchSchSte2018}, verifying that our values are in fact optimal. In each figure below, red edges represent the repeating pattern and  the black node marks one of the vertices which minimizes the status in that graph and likely maximizes the proximity within the category defined by order and connectivity. Due to fact that few congruence classes of a large modulus fall into the range, where we can do exhaustive calculations,
 finding a repeating pattern was difficult by brute force calculations. Once a pattern was established, extensive sampling was conducted in order to test if a better structure could be found, but no such structure ever arose. We conjecture that the bounds presented in this section are optimal.

\begin{figure}[htbp] 
	\centering
		\begin{tikzpicture}
		[scale=0.8,xscale=1,yscale=0.9,rotate=180,inner sep=1mm, %
		vertex/.style={circle,thick,draw}, %
		thickedge/.style={line width=2pt}] %
		\node[vertex] (a1) at (0,4) [fill=white] {};
		\node[vertex] (a2) at (2,4) [fill=white] {};
		\node[vertex,red] (a3) at (4,4) [fill=white] {};
		\node[vertex,red] (a4) at (6,4) [fill=white] {};
		\node[vertex] (a5) at (8,4) [fill=white] {};
		\node[vertex] (a6) at (10,4) [fill=white] {};
		
		\node[vertex] (b1) at (0,0) [fill=white] {};
		\node[vertex] (b2) at (2,0) [fill=white] {};
		\node[vertex,red] (b3) at (4,0) [fill=white] {};
		\node[vertex,red] (b4) at (6,0) [fill=white] {};
		\node[vertex] (b5) at (8,0) [fill=white] {};
		\node[vertex] (b6) at (10,0) [fill=white] {};
		
		\node[vertex] (c1) at (0.5,2) [fill=white] {};
		\node[vertex] (c2) at (2.5,2) [fill=white] {};
		\node[vertex,red] (c3) at (4.5,2) [fill=black] {};
		\node[vertex,red] (c4) at (6.5,2) [fill=white] {};
		\node[vertex] (c5) at (8.5,2) [fill=white] {};
		\node[vertex] (c6) at (10.5,2) [fill=white] {};
		
		\draw[very thick] (a1)--(a2)--(a3) (a5)--(a6);  
		\draw[very thick] (b1)--(b2)--(b3) (b5)--(b6);     
		\draw[very thick] (c1)--(c2)--(c3) (c5)--(c6); 
		\draw[very thick] (a1)--(b1)--(c1)--(a1);
		\draw[very thick] (a2)--(b2)--(c2)--(a2);
		\draw[very thick,red] (a3)--(b3)--(c3)--(a3);
		\draw[very thick,red] (a4)--(b4)--(c4)--(a4);
		\draw[very thick] (a5)--(b5)--(c5)--(a5);
		\draw[very thick] (a6)--(b6)--(c6)--(a6);
		
		\draw[very thick] (c1) -- (a2);  
		\draw[very thick] (c1) -- (b2);  		
		\draw[very thick] (c2) -- (a3);  
		\draw[very thick] (c2) -- (b3);
		\draw[very thick,red] (c3) -- (a4);  
		\draw[very thick,red] (c3) -- (b4);
		\draw[very thick,red] (c4) -- (a5);  
		\draw[very thick,red] (c4) -- (b5);
		
		\draw[very thick] (a1) -- (b2);  
		\draw[very thick] (a2) -- (b3);
		\draw[very thick,red] (a3) -- (b4);
		\draw[very thick,red] (a4) -- (b5);
		
		\draw[very thick,red] (a3) -- (a4);
		\draw[very thick,red] (b3) -- (b4);
		\draw[very thick,red] (c3) -- (c4);
		\draw[very thick,red] (c3) -- (c4);
		
		\draw[very thick] (c5) -- (a6);
		\draw[very thick] (c5) -- (b6);
		\draw[very thick] (a5) -- (b6);
		
		\draw[very thick,red] (b4) -- (b5);
		\draw[very thick,red] (a4) -- (a5);
		\draw[very thick,red] (c4) -- (c5);

		\end{tikzpicture}
		\caption{A triangulation $ T_n $ on $ n = 6k $ vertices which is conjectured to maximize the proximity among triangulations of this order.}
		\label{fig:t0}
	
\end{figure}
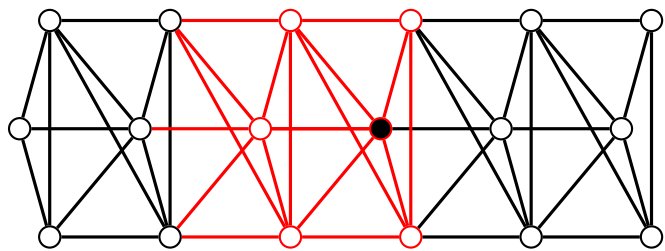

\begin{figure}[htbp] 
	\centering
		\begin{tikzpicture}
		[scale=0.8,xscale=1,yscale=0.9,rotate=180,inner sep=1mm, %
		vertex/.style={circle,thick,draw}, %
		thickedge/.style={line width=2pt}] %
		\node[vertex] (a1) at (0,4) [fill=white] {};
		\node[vertex] (a2) at (2,4) [fill=white] {};
		\node[vertex,red] (a3) at (4,4) [fill=white] {};
		\node[vertex,red] (a4) at (6,4) [fill=white] {};
		\node[vertex] (a5) at (8,4) [fill=white] {};
		\node[vertex] (a6) at (10,4) [fill=white] {};
		
		\node[vertex] (b1) at (0,0) [fill=white] {};
		\node[vertex] (b2) at (2,0) [fill=white] {};
		\node[vertex,red] (b3) at (4,0) [fill=white] {};
		\node[vertex,red] (b4) at (6,0) [fill=white] {};
		\node[vertex] (b5) at (8,0) [fill=white] {};
		\node[vertex] (b6) at (10,0) [fill=white] {};
		
		\node[vertex] (c1) at (0.5,2) [fill=white] {};
		\node[vertex] (c2) at (2.5,2) [fill=white] {};
		\node[vertex,red] (c3) at (4.5,2) [fill=black] {};
		\node[vertex,red] (c4) at (6.5,2) [fill=white] {};
		\node[vertex] (c5) at (8.5,2) [fill=white] {};
		\node[vertex] (c6) at (10.5,2) [fill=white] {};
		\node[vertex] (c0) at (-1.5,2) [fill=white] {};
		
		\draw[very thick] (a1)--(a2)--(a3) (a5)--(a6);  
		\draw[very thick] (b1)--(b2)--(b3) (b5)--(b6);     
		\draw[very thick] (c0)--(c1)--(c2)--(c3) (c5)--(c6); 
		\draw[very thick] (a1)--(b1)--(c1)--(a1);
		\draw[very thick] (a2)--(b2)--(c2)--(a2);
		\draw[very thick,red] (a3)--(b3)--(c3)--(a3);
		\draw[very thick,red] (a4)--(b4)--(c4)--(a4);
		\draw[very thick] (a5)--(b5)--(c5)--(a5);
		\draw[very thick] (a6)--(b6)--(c6)--(a6);
		
		\draw[very thick] (c1) -- (a2);  
		\draw[very thick] (c1) -- (b2);  		
		\draw[very thick] (c2) -- (a3);  
		\draw[very thick] (c2) -- (b3);
		\draw[very thick,red] (c3) -- (a4);  
		\draw[very thick,red] (c3) -- (b4);
		\draw[very thick,red] (c4) -- (a5);  
		\draw[very thick,red] (c4) -- (b5);
		
		\draw[very thick] (a1) -- (b2);  
		\draw[very thick] (a2) -- (b3);
		\draw[very thick,red] (a3) -- (b4);
		\draw[very thick,red] (a4) -- (b5);
		
		\draw[very thick,red] (a3) -- (a4);
		\draw[very thick,red] (b3) -- (b4);
		\draw[very thick,red] (c3) -- (c4);
		\draw[very thick,red] (c3) -- (c4);
		
		\draw[very thick] (c5) -- (a6);
		\draw[very thick] (c5) -- (b6);
		\draw[very thick] (a5) -- (b6);
		
		\draw[very thick] (c0) -- (a1);
		\draw[very thick] (c0) -- (b1);
		
		\draw[very thick,red] (b4) -- (b5);
		\draw[very thick,red] (a4) -- (a5);
		\draw[very thick,red] (c4) -- (c5);

		\end{tikzpicture}
		\caption{A triangulation $ T_n $ on $ n = 6k+1 $ vertices which is conjectured to maximize the proximity  among triangulations of this order.}
		\label{fig:t1}
	
\end{figure}

\begin{figure}[htbp] 
	\centering
		\begin{tikzpicture}
		[scale=0.8,xscale=1,yscale=0.9,rotate=180,inner sep=1mm, %
		vertex/.style={circle,thick,draw}, %
		thickedge/.style={line width=2pt}] %
		\node[vertex] (a1) at (0,4) [fill=white] {};
		\node[vertex] (a2) at (2,4) [fill=white] {};
		\node[vertex,red] (a3) at (4,4) [fill=white] {};
		\node[vertex,red] (a4) at (6,4) [fill=white] {};
		\node[vertex] (a5) at (8,4) [fill=white] {};
		\node[vertex] (a6) at (10,4) [fill=white] {};
		\node[vertex] (a0) at (-2,4) [fill=white] {};

		\node[vertex] (b1) at (0,0) [fill=white] {};
		\node[vertex] (b2) at (2,0) [fill=white] {};
		\node[vertex,red] (b3) at (4,0) [fill=white] {};
		\node[vertex,red] (b4) at (6,0) [fill=white] {};
		\node[vertex] (b5) at (8,0) [fill=white] {};
		\node[vertex] (b6) at (10,0) [fill=white] {};
		
		\node[vertex] (c1) at (0.5,2) [fill=white] {};
		\node[vertex] (c2) at (2.5,2) [fill=white] {};
		\node[vertex,red] (c3) at (4.5,2) [fill=black] {};
		\node[vertex,red] (c4) at (6.5,2) [fill=white] {};
		\node[vertex] (c5) at (8.5,2) [fill=white] {};
		\node[vertex] (c6) at (10.5,2) [fill=white] {};
		\node[vertex] (c0) at (-1.5,2) [fill=white] {};
		
		\draw[very thick] (a0)--(a1)--(a2)--(a3) (a5)--(a6);  
		\draw[very thick] (b1)--(b2)--(b3) (b5)--(b6);     
		\draw[very thick] (c0)--(c1)--(c2)--(c3) (c5)--(c6); 
		\draw[very thick] (a1)--(b1)--(c1)--(a1);
		\draw[very thick] (a2)--(b2)--(c2)--(a2);
		\draw[very thick,red] (a3)--(b3)--(c3)--(a3);
		\draw[very thick,red] (a4)--(b4)--(c4)--(a4);
		\draw[very thick] (a5)--(b5)--(c5)--(a5);
		\draw[very thick] (a6)--(b6)--(c6)--(a6);
		
		\draw[very thick] (c1) -- (a2);  
		\draw[very thick] (c1) -- (b2);  		
		\draw[very thick] (c2) -- (a3);  
		\draw[very thick] (c2) -- (b3);
		\draw[very thick,red] (c3) -- (a4);  
		\draw[very thick,red] (c3) -- (b4);
		\draw[very thick,red] (c4) -- (a5);  
		\draw[very thick,red] (c4) -- (b5);
		
		\draw[very thick] (a1) -- (b2);  
		\draw[very thick] (a2) -- (b3);
		\draw[very thick,red] (a3) -- (b4);
		\draw[very thick,red] (a4) -- (b5);
		
		\draw[very thick,red] (a3) -- (a4);
		\draw[very thick,red] (b3) -- (b4);
		\draw[very thick,red] (c3) -- (c4);
		\draw[very thick,red] (c3) -- (c4);
		
		\draw[very thick] (c5) -- (a6);
		\draw[very thick] (c5) -- (b6);
		\draw[very thick] (a5) -- (b6);
		
		\draw[very thick] (c0) -- (a1);
		\draw[very thick] (c0) -- (b1);
		
		\draw[very thick] (a0) -- (b1);
		\draw[very thick] (a0) -- (c0);
		
		\draw[very thick,red] (b4) -- (b5);
		\draw[very thick,red] (a4) -- (a5);
		\draw[very thick,red] (c4) -- (c5);
		
		\end{tikzpicture}
		\caption{A triangulation $ T_n $ on $ n = 6k+2 $ vertices which is conjectured to maximize the proximity among triangulations of this order.}
		\label{fig:t2}
	
\end{figure}

\begin{figure}[htbp] 
	\centering
		\begin{tikzpicture}
		[scale=0.8,xscale=1,yscale=0.9,rotate=180,inner sep=1mm, %
		vertex/.style={circle,thick,draw}, %
		thickedge/.style={line width=2pt}] %
		\node[vertex] (a1) at (0,4) [fill=white] {};
		\node[vertex] (a2) at (2,4) [fill=white] {};
		\node[vertex] (a3) at (4,4) [fill=white] {};
		\node[vertex,red] (a4) at (6,4) [fill=white] {};
		\node[vertex,red] (a5) at (8,4) [fill=white] {};
		\node[vertex] (a6) at (10,4) [fill=white] {};
		
		\node[vertex] (b1) at (0,0) [fill=white] {};
		\node[vertex] (b2) at (2,0) [fill=white] {};
		\node[vertex] (b3) at (4,0) [fill=white] {};
		\node[vertex,red] (b4) at (6,0) [fill=white] {};
		\node[vertex,red] (b5) at (8,0) [fill=white] {};
		\node[vertex] (b6) at (10,0) [fill=white] {};
		
		\node[vertex] (c1) at (0.5,2) [fill=white] {};
		\node[vertex] (c2) at (2.5,2) [fill=white] {};
		\node[vertex] (c3) at (4.5,2) [fill=white] {};
		\node[vertex,red] (c4) at (6.5,2) [fill=black] {};
		\node[vertex,red] (c5) at (8.5,2) [fill=white] {};
		\node[vertex] (c6) at (10.5,2) [fill=white] {};
		
		\draw[very thick] (a1)--(a2)--(a3) (a6);  
		\draw[very thick] (b1)--(b2)--(b3) (b6);     
		\draw[very thick] (c1)--(c2)--(c3) (c6); 
		\draw[very thick] (a1)--(b1)--(c1)--(a1);
		\draw[very thick] (a2)--(b2)--(c2)--(a2);
		\draw[very thick] (a3)--(b3)--(c3)--(a3);
		\draw[very thick,red] (a4)--(b4)--(c4)--(a4);
		\draw[very thick,red] (a5)--(b5)--(c5)--(a5);
		\draw[very thick] (a6)--(b6)--(c6)--(a6);
		
		\draw[very thick] (c1) -- (a2);  
		\draw[very thick] (c1) -- (b2);  		
		\draw[very thick] (c2) -- (a3);  
		\draw[very thick] (c2) -- (b3);
		\draw[very thick] (c3) -- (a4);  
		\draw[very thick] (c3) -- (b4);
		\draw[very thick,red] (c4) -- (a5);  
		\draw[very thick,red] (c4) -- (b5);
		
		\draw[very thick] (a1) -- (b2);  
		\draw[very thick] (a2) -- (b3);
		\draw[very thick] (a3) -- (b4);
		\draw[very thick,red] (a4) -- (b5);
		
		\draw[very thick] (a3) -- (a4);
		\draw[very thick] (b3) -- (b4);
		\draw[very thick] (c3) -- (c4);
		\draw[very thick] (c3) -- (c4);
		
		\draw[very thick,red] (c5) -- (a6);
		\draw[very thick,red] (c5) -- (b6);
		\draw[very thick,red] (a5) -- (b6);

		\node[vertex] (a7) at (12,4) [fill=white] {};
		\node[vertex] (b7) at (12,0) [fill=white] {};
		\node[vertex] (c7) at (12.5,2) [fill=white] {};
		\draw[very thick] (a6) -- (a7);
		\draw[very thick] (b6) -- (b7);
		\draw[very thick] (c6) -- (c7);
		\draw[very thick] (a7)--(b7)--(c7)--(a7);
		\draw[very thick] (c6)--(a7);
		\draw[very thick] (c6)--(b7);
		\draw[very thick] (a6)--(b7);
		
		\draw[very thick,red] (b6) -- (b5);
		\draw[very thick,red] (a6) -- (a5);
		\draw[very thick,red] (c6) -- (c5);
		
		\draw[very thick,red] (b4) -- (b5);
		\draw[very thick,red] (a4) -- (a5);
		\draw[very thick,red] (c4) -- (c5);
		
		\end{tikzpicture}
		\caption{A triangulation $ T_n $ on $ n = 6k+3 $ vertices which is conjectured to maximize the proximity among triangulations of this order.}
		\label{fig:t3}
	
\end{figure}

\begin{figure}[htbp] 
	\centering
		\begin{tikzpicture}
		[scale=0.8,xscale=1,yscale=0.9,rotate=180,inner sep=1mm, %
		vertex/.style={circle,thick,draw}, %
		thickedge/.style={line width=2pt}] %
		\node[vertex] (a1) at (0,4) [fill=white] {};
		\node[vertex] (a2) at (2,4) [fill=white] {};
		\node[vertex] (a3) at (4,4) [fill=white] {};
		\node[vertex,red] (a4) at (6,4) [fill=white] {};
		\node[vertex,red] (a5) at (8,4) [fill=white] {};
		\node[vertex] (a6) at (10,4) [fill=white] {};
		
		\node[vertex] (b1) at (0,0) [fill=white] {};
		\node[vertex] (b2) at (2,0) [fill=white] {};
		\node[vertex] (b3) at (4,0) [fill=white] {};
		\node[vertex,red] (b4) at (6,0) [fill=white] {};
		\node[vertex,red] (b5) at (8,0) [fill=white] {};
		\node[vertex] (b6) at (10,0) [fill=white] {};
		
		\node[vertex] (c1) at (0.5,2) [fill=white] {};
		\node[vertex] (c2) at (2.5,2) [fill=white] {};
		\node[vertex] (c3) at (4.5,2) [fill=white] {};
		\node[vertex,red] (c4) at (6.5,2) [fill=black] {};
		\node[vertex,red] (c5) at (8.5,2) [fill=white] {};
		\node[vertex] (c6) at (10.5,2) [fill=white] {};
		\node[vertex] (c0) at (-1.5,2) [fill=white] {};
		
		\draw[very thick] (a1)--(a2)--(a3) (a6);  
		\draw[very thick] (b1)--(b2)--(b3) (b6);     
		\draw[very thick] (c0)--(c1)--(c2)--(c3) (c6); 
		\draw[very thick] (a1)--(b1)--(c1)--(a1);
		\draw[very thick] (a2)--(b2)--(c2)--(a2);
		\draw[very thick] (a3)--(b3)--(c3)--(a3);
		\draw[very thick,red] (a4)--(b4)--(c4)--(a4);
		\draw[very thick,red] (a5)--(b5)--(c5)--(a5);
		\draw[very thick] (a6)--(b6)--(c6)--(a6);
		
		\draw[very thick] (c1) -- (a2);  
		\draw[very thick] (c1) -- (b2);  		
		\draw[very thick] (c2) -- (a3);  
		\draw[very thick] (c2) -- (b3);
		\draw[very thick] (c3) -- (a4);  
		\draw[very thick] (c3) -- (b4);
		\draw[very thick,red] (c4) -- (a5);  
		\draw[very thick,red] (c4) -- (b5);
		
		\draw[very thick] (a1) -- (b2);  
		\draw[very thick] (a2) -- (b3);
		\draw[very thick] (a3) -- (b4);
		\draw[very thick,red] (a4) -- (b5);
		
		\draw[very thick] (a3) -- (a4);
		\draw[very thick] (b3) -- (b4);
		\draw[very thick] (c3) -- (c4);
		\draw[very thick] (c3) -- (c4);
		
		\draw[very thick,red] (c5) -- (a6);
		\draw[very thick,red] (c5) -- (b6);
		\draw[very thick,red] (a5) -- (b6);
		
		\draw[very thick] (c0) -- (a1);
		\draw[very thick] (c0) -- (b1);
		
		\node[vertex] (a7) at (12,4) [fill=white] {};
		\node[vertex] (b7) at (12,0) [fill=white] {};
		\node[vertex] (c7) at (12.5,2) [fill=white] {};
		\draw[very thick] (a6) -- (a7);
		\draw[very thick] (b6) -- (b7);
		\draw[very thick] (c6) -- (c7);
		\draw[very thick] (a7)--(b7)--(c7)--(a7);
		\draw[very thick] (c6)--(a7);
		\draw[very thick] (c6)--(b7);
		\draw[very thick] (a6)--(b7);
		
		\draw[very thick,red] (b6) -- (b5);
		\draw[very thick,red] (a6) -- (a5);
		\draw[very thick,red] (c6) -- (c5);
		
		\draw[very thick,red] (b4) -- (b5);
		\draw[very thick,red] (a4) -- (a5);
		\draw[very thick,red] (c4) -- (c5);
		
		\end{tikzpicture}
		\caption{A triangulation $ T_n $ on $ n = 6k+4 $ vertices which is conjectured to maximize the proximity among triangulations of this order.}
		\label{fig:t4}
	
\end{figure}

\begin{figure}[htbp] 
	\centering
	\begin{tikzpicture}
	[scale=0.8,xscale=1,yscale=0.9,rotate=180,inner sep=1mm, %
	vertex/.style={circle,thick,draw}, %
	thickedge/.style={line width=2pt}] %
	\node[vertex] (a1) at (0,4) [fill=white] {};
	\node[vertex] (a2) at (2,4) [fill=white] {};
	\node[vertex] (a3) at (4,4) [fill=white] {};
	\node[vertex,red] (a4) at (6,4) [fill=white] {};
	\node[vertex,red] (a5) at (8,4) [fill=white] {};
	\node[vertex] (a6) at (10,4) [fill=white] {};
	\node[vertex] (a0) at (-2,4) [fill=white] {};

	\node[vertex] (b1) at (0,0) [fill=white] {};
	\node[vertex] (b2) at (2,0) [fill=white] {};
	\node[vertex] (b3) at (4,0) [fill=white] {};
	\node[vertex,red] (b4) at (6,0) [fill=white] {};
	\node[vertex,red] (b5) at (8,0) [fill=white] {};
	\node[vertex] (b6) at (10,0) [fill=white] {};
	
	\node[vertex] (c1) at (0.5,2) [fill=white] {};
	\node[vertex] (c2) at (2.5,2) [fill=white] {};
	\node[vertex] (c3) at (4.5,2) [fill=black] {};
	\node[vertex,red] (c4) at (6.5,2) [fill=white] {};
	\node[vertex,red] (c5) at (8.5,2) [fill=white] {};
	\node[vertex] (c6) at (10.5,2) [fill=white] {};
	\node[vertex] (c0) at (-1.5,2) [fill=white] {};
	
	\draw[very thick] (a0)--(a1)--(a2)--(a3) (a6);  
	\draw[very thick] (b1)--(b2)--(b3) (b6);     
	\draw[very thick] (c0)--(c1)--(c2)--(c3) (c6); 
	\draw[very thick] (a1)--(b1)--(c1)--(a1);
	\draw[very thick] (a2)--(b2)--(c2)--(a2);
	\draw[very thick] (a3)--(b3)--(c3)--(a3);
	\draw[very thick,red] (a4)--(b4)--(c4)--(a4);
	\draw[very thick,red] (a5)--(b5)--(c5)--(a5);
	\draw[very thick] (a6)--(b6)--(c6)--(a6);
	
	\draw[very thick] (c1) -- (a2);  
	\draw[very thick] (c1) -- (b2);  		
	\draw[very thick] (c2) -- (a3);  
	\draw[very thick] (c2) -- (b3);
	\draw[very thick] (c3) -- (a4);  
	\draw[very thick] (c3) -- (b4);
	\draw[very thick,red] (c4) -- (a5);  
	\draw[very thick,red] (c4) -- (b5);
	
	\draw[very thick] (a1) -- (b2);  
	\draw[very thick] (a2) -- (b3);
	\draw[very thick] (a3) -- (b4);
	\draw[very thick,red] (a4) -- (b5);
	
	\draw[very thick] (a3) -- (a4);
	\draw[very thick] (b3) -- (b4);
	\draw[very thick] (c3) -- (c4);
	\draw[very thick] (c3) -- (c4);
	
	\draw[very thick,red] (c5) -- (a6);
	\draw[very thick,red] (c5) -- (b6);
	\draw[very thick,red] (a5) -- (b6);
	
	\draw[very thick] (c0) -- (a1);
	\draw[very thick] (c0) -- (b1);
	
	\draw[very thick] (a0) -- (b1);
	\draw[very thick] (a0) -- (c0);
	
	\node[vertex] (a7) at (12,4) [fill=white] {};
	\node[vertex] (b7) at (12,0) [fill=white] {};
	\node[vertex] (c7) at (12.5,2) [fill=white] {};
	\draw[very thick] (a6) -- (a7);
	\draw[very thick] (b6) -- (b7);
	\draw[very thick] (c6) -- (c7);
	\draw[very thick] (a7)--(b7)--(c7)--(a7);
	\draw[very thick] (c6)--(a7);
	\draw[very thick] (c6)--(b7);
	\draw[very thick] (a6)--(b7);
	
	\draw[very thick,red] (b6) -- (b5);
	\draw[very thick,red] (a6) -- (a5);
	\draw[very thick,red] (c6) -- (c5);
	
	\draw[very thick,red] (b4) -- (b5);
	\draw[very thick,red] (a4) -- (a5);
	\draw[very thick,red] (c4) -- (c5);
	
	\end{tikzpicture}
	\caption{A triangulation $ T_n $ on $ n = 6k+5 $ vertices which is conjectured to maximize the proximity among triangulations of this order.}
	\label{fig:t5}
	
\end{figure}

\begin{equation} \label{tpoly}
\pi(T_n) =
\begin{cases}
\frac{n+5}{12} + \frac{5}{12(n-1)} & \text{if } n=6k \\
\frac{n+5}{12} & \text{if } n=6k+1 \\
\frac{n+5}{12} + \frac{5}{12(n-1)} & \text{if } n=6k+2 \\
\frac{n+5}{12} + \frac{2}{3(n-1)} & \text{if }n=6k+3 \\
\frac{n+5}{12} + \frac{3}{4(n-1)} & \text{if } n=6k+4 \\
\frac{n+5}{12} + \frac{2}{3(n-1)} & \text{if } n=6k+5 \\
\end{cases}
\end{equation}

\begin{table}

		\caption{A 4-connected triangulation $ T_n^4 $ on $ n = 8k+1 $ vertices which is conjectured to maximize the proximity among triangulations of this order and connectivity.}
		\label{fig:t4c1}
	
\end{figure}

\begin{equation} \label{t4cpoly}
\pi(T_n^4) =
\begin{cases}
\frac{n+9}{16} + \frac{21}{16(n-1)} & \text{if } n=8k+2 \\
\frac{n+9}{16} + \frac{3}{2(n-1)} & \text{if } n=8k+3 \\
\frac{n+9}{16} + \frac{25}{16(n-1)} & \text{if } n=8k+4 \\
\frac{n+9}{16} + \frac{3}{2(n-1)} & \text{if } n=8k+5 \\
\frac{n+9}{16} + \frac{21}{16(n-1)} & \text{if } n=8k+6 \\
\frac{n+9}{16} + \frac{1}{n-1} & \text{if } n=8k+7 \\
\frac{n+9}{16} + \frac{25}{16(n-1)} & \text{if } n=8k \\
\frac{n+9}{16} + \frac{1}{n-1} & \text{if } n=8k+1 \\
\end{cases}
\end{equation}

\begin{table}

	\centering
	\caption{A summary of the largest minimum status among all 5-connected triangulations.}
	\label{tab:t5csummary}
\end{table}

\begin{equation}
\pi(T_n^5) =
\begin{cases} \label{t5cpoly}
\frac{n+13}{20} & \text{if } n=10k+7 \\
\frac{n+13}{20} - \frac{7}{20(n-1)} & \text{if } n=10k+8 \\
\frac{n+13}{20} - \frac{4}{5(n-1)} & \text{if } n=10k+9 \\
\frac{n+13}{20} - \frac{7}{20(n-1)} & \text{if } n=10k \\
\frac{n+13}{20} - \frac{1}{n-1} & \text{if } n=10k+1 \\
\frac{n+13}{20} - \frac{1}{4(n-1)} & \text{if } n=10k+2 \\
\frac{n+13}{20} - \frac{8}{5(n-1)} & \text{if } n=10k+3 \\
\frac{n+13}{20} - \frac{11}{20(n-1)} & \text{if } n=10k+4 \\
\frac{n+13}{20} - \frac{3}{5(n-1)} & \text{if } n=10k+5 \\
\frac{n+13}{20} - \frac{3}{4(n-1)} & \text{if } n=10k+6 \\
\end{cases}
\end{equation}

\begin{figure}[htbp]
	\centering

		\caption{A 3-connected quadrangulation $ Q_n^3 $ on $ n = 6k+1 $ vertices which is conjectured to maximize the proximity among quadrangulations of this order and connectivity.}
		\label{fig:q3c1} 
	
\end{figure}

	\begin{equation}\label{q3cpoly}
	\pi(Q_n^3) =
	\begin{cases}
	\frac{n+9}{12} - \frac{23}{12(n-1)} & \text{if } n=6k+2 \\
	\frac{n+9}{12} - \frac{4}{n-1} & \text{if } n=6k+3 \\
	\frac{n+9}{12} - \frac{13}{4(n-1)} & \text{if } n=6k+4 \\
	\frac{n+9}{12} - \frac{8}{3(n-1)} & \text{if } n=6k+5 \\
	\frac{n+9}{12} - \frac{13}{4(n-1)} & \text{if } n=6k \\
	\frac{n+9}{12} - \frac{3}{n-1} & \text{if } n=6k+1 \\
	\end{cases}
	\end{equation}

\begin{table}
	\begin{tabular}{ |c|c|c| } 
		\hline
		Order & Min Status & Count \\ [0.5ex] 
		\hline
		
		8 & 12 & 1 \\ 
		9 & --- & 0 \\ 
		10 & 15 & 1 \\ 
		11 & 18 & 1 \\ 
		12 & 20 & 2 \\ 
		13 & 22 & 1 \\ 
		14 & 28 & 1 \\ 
		15 & 29 & 1 \\ 
		16 & 32 & 4 \\ 
		17 & 35 & 1 \\ 
		18 & 39 & 1 \\ 
		19 & 41 & 3 \\ 
		20 & 44 & 23 \\ 
		21 & 47 & 7 \\ 
		22 & 55 & 1 \\ 
		23 & 57 & 1 \\ 
		24 & 60 & 16 \\ 
		25 & 65 & 1 \\ 
		26 & 71 & 1 \\ 
		27 & 74 & 3 \\ 
		28 & 80 & 2 \\ 
		\hline
	\end{tabular}
	\label{tab:q3csummary}
	\centering
	\caption{A summary of the largest minimum status among all 3-connected quadrangulations.}
\end{table}

\pagebreak

\end{document}